\theoremstyle{plain}
\newtheorem{thm}{Theorem}
\newtheorem{prop}[thm]{Proposition}
\newtheorem{cor}[thm]{Corollary}
\theoremstyle{definition}
\newtheorem{definition}[thm]{Definition}
\newtheorem{assert}[thm]{Assertion}
\numberwithin{thm}{section}
\newcommand{\adj}{\leftrightarrow}
\newcommand{\adjeq}{\leftrightarroweq}
\def\Z{{\mathbb Z}}
\def\N{{\mathbb N}}
\title{Corrigendum for Han's Corrigendum}
\author{Laurence Boxer
\thanks{
    Department of Computer and Information Sciences,
    Niagara University,
    Niagara University, NY 14109, USA; \newline
    and \newline
    Department of Computer Science and Engineering,
    State University of New York at Buffalo. \newline
    email: boxer@niagara.edu
}
}
\date{ }
\begin{document}
\maketitle{}

\begin{abstract}
S.-E. Han's paper, "Remarks on Pseudocovering Spaces in a Digital Topological Setting:
A Corrigendum", is meant to address errors in previous papers. However, this paper
is also marked by errors in its mathematics, as well as improprieties in its citations.
We address these flaws in the current work.

Key words and phrases: digital topology, digital image, covering map

MSC: 54B20, 54C35
\end{abstract}

\maketitle

\section{Introduction}
S.-E. Han's paper \cite{HanCorrig} claims in its title to be a Corrigendum
for his papers~\cite{HanUnique,HanEquiv}, but is
itself in need of a Corrigendum. These papers are concerned with
attempted variants on covering spaces in digital topology, among which are those that
turn out to coincide with digital covering spaces.
The current paper discusses flaws in Han's Corrigendum.

\begin{quote}
    The notion of a covering map has been adapted from
classical algebraic topology to digital topology,
where it is an important tool for computing
digital versions of fundamental groups for binary
digital images. \cite{BxVariants}
\end{quote}

We show that Han's ``Corrigendum" remains flawed in multiple ways:
\begin{itemize}
    \item One of the ``proofs" in~\cite{HanCorrig} is incorrect, although its
          assertion is correct; we discuss the error and give a correct proof 
          of the assertion.
    \item Much of~\cite{HanCorrig} is unoriginal, with original versions not credited.
    \item Han doesn't even cite his own papers correctly.     
\end{itemize}

\section{Preliminaries}
\subsection{Digital images}
Material in this section is largely quoted or paraphrased from~\cite{BxVariants}.

We use $\N$ for the set of natural numbers, 
$\Z$ for the set of integers, and
$\#X$ for the number of distinct members of $X$. Let $\N^* = \N \cup \{0\}$.

We typically denote a (binary) digital image
as $(X,\kappa)$, where $\emptyset \neq X \subset \Z^n$ for some
$n \in \N$ and $\kappa$ represents an adjacency
relation of pairs of points in $X$. Thus,
$(X,\kappa)$ is a graph, in which members of $X$ may be
thought of as black points, and members of $\Z^n \setminus X$
as white points, typically of a picture of some ``real world" 
object or scene.

Let $u,n \in \N$, $1 \le u \le n$. 
Han's papers use ``$k$-adjacency" sometimes to mean
an arbitrary adjacency, sometimes as an abbreviation
for what he calls ``$k(u,n)$-adjacency," where the
digital image $(X,k)$ satisfies $X \subset \Z^n$ and
$x = (x_1, \ldots, x_n),~y=(y_1, \ldots, y_n) \in X$ 
are $k(u,n)$-adjacent if and only if
\begin{itemize}
    \item $x \neq y$, and
    \item for at most $u$ indices~$i$, 
          $\mid x_i - y_i \mid = 1$, and
    \item for all indices $j$ such that 
          $\mid x_j - y_j \mid \neq 1$, we have
          $x_j = y_j$.
\end{itemize}
Generally, $n$ is either explicitly given (e.g., $n=2$) or is understood
as an abstract positive integer.
Therefore, other authors refer to this adjacency as $c_u$-adjacency.
We will prefer the latter notation in the current paper.
The $c_u$ adjacencies are the adjacencies most used
in digital topology, especially $c_1$ and $c_n$.

In low dimensions, it is also common to denote a
$c_u$ adjacency by the number of points that can
have this adjacency with a given point in $\Z^n$. E.g.,
\begin{itemize}
    \item For subsets of $\Z^1$, $c_1$-adjacency is 2-adjacency.
    \item For subsets of $\Z^2$, $c_1$-adjacency is 4-adjacency and
          $c_2$-adjacency is 8-adjacency.
    \item For subsets of $\Z^3$, $c_1$-adjacency is 6-adjacency,
          $c_2$-adjacency is 18-adjacency, and
          $c_3$-adjacency is 26-adjacency.
\end{itemize}

We use the notations $y \adj_{\kappa} x$, or, when
the adjacency $\kappa$ can be assumed, $y \adj x$, to mean
$x$ and $y$ are $\kappa$-adjacent.
The notations $y \adjeq_{\kappa} x$, or, when
$\kappa$ can be assumed, $y \adjeq x$, mean either
$y=x$ or $y \adj_{\kappa} x$.
For $x \in X$, let
\[ N(X,x,\kappa) =
  \{ \, y \in X \mid x \adj_{\kappa} y \, \}.
\]
When the image $(X,\kappa)$ under discussion is clear, we
will use the notations $N(x)$ or $N_{\kappa}(x)$
as follows.
\[
N(x) = \{ \, y \in X \mid y \adjeq_{\kappa} x \, \} =
N(X,x,\kappa) \cup \{x\}.
\]

A sequence $P=\{y_i\}_{i=0}^m$ in a digital image $(X,\kappa)$ is
a {\em $\kappa$-path from $a \in X$ to $b \in X$} if
$a=y_0$, $b=y_m$, and $y_i \adjeq_{\kappa} y_{i+1}$ 
for $0 \leq i < m$.

$X$ is {\em $\kappa$-connected}~\cite{Rosenfeld},
or {\em connected} when $\kappa$
is understood, if for every pair of points $a,b \in X$ there
exists a $\kappa$-path in $X$ from $a$ to $b$.

A {\em (digital) $\kappa$-closed curve} is a
path $S=\{s_i\}_{i=0}^m$ such that $s_0=s_m$,
and $0 < |i - j| < m$ 
implies $s_i \neq s_j$. If, also, $0 \le i < m$ implies
\[ N(S,x_i,\kappa)=\{x_{(i-1)\mod n},~x_{(i+1)\mod m}\}
\]
then $S$ is a {\em (digital) 
$\kappa$-simple closed curve}.

\subsection{Digitally continuous functions}
Material in this section is largely quoted or paraphrased from~\cite{BxVariants}.

Digital continuity is defined
to preserve connectedness, as at
Definition~\ref{continuous} below. By
using adjacency as our standard of ``closeness," we
get Theorem~\ref{continuityPreserveAdj} below.

\begin{definition}
\label{continuous}
{\rm ~\cite{Boxer99} (generalizing a definition of~\cite{Rosenfeld})}
Let $(X,\kappa)$ and $(Y,\lambda)$ be digital images.
A function $f: X \rightarrow Y$ is 
{\em $(\kappa,\lambda)$-continuous} if for
every $\kappa$-connected $A \subset X$ we have that
$f(A)$ is a $\lambda$-connected subset of $Y$.
\end{definition}

If either of $X$ or $Y$ is a subset of the 
other, we use the abbreviation
$\kappa$-continuous for $(\kappa,\kappa)$-continuous.

When the adjacency relations are understood, we will simply say that $f$ is \emph{continuous}. Continuity can be expressed in terms of adjacency of points:

\begin{thm}
{\rm ~\cite{Rosenfeld,Boxer99}}
\label{continuityPreserveAdj}
A function $f:X\to Y$ is continuous if and only if $x \adj x'$ in $X$ 
implies $f(x) \adjeq f(x')$.
\end{thm}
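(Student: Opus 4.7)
The plan is to prove the two directions separately, both by reducing to the defining behavior on $\kappa$-paths (since both connectedness and adjacency can be witnessed by two-point paths).

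For the forward direction, I would assume $f$ is $(\kappa,\lambda)$-continuous and take any $x \adj_\kappa x'$ in $X$. The key move is to apply continuity to the smallest nontrivial connected set, namely $A = \{x, x'\}$, which is $\kappa$-connected because the sequence $x, x'$ is itself a $\kappa$-path (using $x \adjeq_\kappa x'$). Then $f(A) = \{f(x), f(x')\}$ must be $\lambda$-connected, and a quick check from the definitions shows that a two-point set $\{p, q\}$ is $\lambda$-connected iff $p \adjeq_\lambda q$. Hence $f(x) \adjeq_\lambda f(x')$.

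For the reverse direction, I would assume the adjacency-preservation property and let $A \subset X$ be $\kappa$-connected. To show $f(A)$ is $\lambda$-connected, I would pick arbitrary $f(a), f(b) \in f(A)$, choose a $\kappa$-path $a = y_0, y_1, \ldots, y_m = b$ in $A$, and verify that $f(y_0), f(y_1), \ldots, f(y_m)$ is a $\lambda$-path in $f(A)$. The step $y_i \adjeq_\kappa y_{i+1}$ splits into two cases: if $y_i = y_{i+1}$, then $f(y_i) = f(y_{i+1})$; if $y_i \adj_\kappa y_{i+1}$, then the hypothesis gives $f(y_i) \adjeq_\lambda f(y_{i+1})$. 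In either case, $f(y_i) \adjeq_\lambda f(y_{i+1})$, so the image sequence is a valid $\lambda$-path.

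There isn't really a substantive obstacle here; the only minor subtlety is being careful about the $\adjeq$ versus $\adj$ distinction (allowing $y_i = y_{i+1}$ within a path and allowing $f(x) = f(x')$ when two adjacent points collapse). The argument goes through verbatim on both sides, so the whole proof amounts to unwinding Definition~\ref{continuous} and the definition of a $\kappa$-path given earlier in the Preliminaries.
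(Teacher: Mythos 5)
Your argument is correct and is the standard one: the paper states this result without proof (citing Rosenfeld and Boxer), and the cited proof proceeds exactly as you do, applying continuity to the two-point connected set $\{x,x'\}$ in one direction and pushing a $\kappa$-path forward to a $\lambda$-path in the other. Your handling of the $\adj$ versus $\adjeq$ distinction is the only delicate point, and you treat it correctly.
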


Han's papers generally use the equivalent formulation that
$f$ is continuous if and only for every $x \in X$,
$f(N_{\kappa}(x)) \subset N_{\lambda}(f(x))$.

See also~\cite{Chen94,Chen04}, where similar notions are referred to as {\em immersions}, {\em gradually varied operators},
and {\em gradually varied mappings}.

A digital {\em isomorphism} (called {\em homeomorphism}
in~\cite{Boxer94}) is a $(\kappa,\lambda)$-continuous
surjection $f: X \to Y$ such that $f^{-1}: Y \to X$ is
$(\lambda,\kappa)$-continuous.

The literature uses {\em path} polymorphically: a
$(c_1,\kappa)$-continuous function $f: [0,m]_{\Z} \to X$
is a $\kappa$-path if $f([0,m]_{\Z})$ is a $\kappa$-path
as described above from $f(0)$ to $f(m)$.

\subsection{Han's variants on local isomorphisms}
Material in this section is largely quoted or paraphrased from~\cite{BxVariants}.

The following Definition~\ref{coveringDef} of a digital covering map is
formally simpler but equivalent to the definition Han introduced in~\cite{Han05}.
The equivalence was shown in~\cite{BxAndWedges} and at about the same time
in~\cite{HanLectNotes}; it may have been derived independently by the authors of
these two papers. However, in~\cite{HanCorrig}, Definition~\ref{coveringDef} is attributed to three of Han's papers without
acknowledging its origin in~\cite{BxAndWedges,HanLectNotes}.

\begin{definition}
   {\rm \cite{BxAndWedges}}
   \label{coveringDef}
   Let $p: (E,\kappa) \to (B,\lambda)$ be a continuous
   surjection of digital images. The map $p$ is a
   {\em $(\kappa,\lambda)$-covering (map)} if and only if
   \begin{enumerate}
       \item for every $b \in B$, there is an index set~$M$
             such that 
             \[p^{-1}(N_{\lambda}(b)) = 
             \bigcup_{i \in M} N_{\kappa}(e_i), 
             \mbox{ where } e_i \in p^{-1}(b);
             \]
       \item if $i,j \in M$, $i \neq j$, then
            $N_{\kappa}(e_i) \cap N_{\kappa}(e_j) =
            \emptyset$; and
       \item $p|_{N_{\kappa}(e_i)}: N_{\kappa}(e_i) \to
              N_{\lambda}(b)$ is a 
              $(\kappa,\lambda)$-isomorphism.
   \end{enumerate}
\end{definition}

We find the following definition in Han's
paper~\cite{Han20} (not in~\cite{Han04} despite the claims
to the contrary in~\cite{Han20,HanEquiv}).

\begin{definition}
\label{PL-iso}
A digitally continuous map $h: (X,\kappa) \to (Y,\lambda)$
is a {\em pseudo-local (PL) isomorphism} if for every
$x \in X$, $h(N_{\kappa}(x)) \subset Y$ is
$\lambda$-isomorphic to 
$N_{\lambda}(h(x)) \subset Y$.
\end{definition}

In his paper~\cite{Han04}, Han gives the following.

\begin{definition}
\label{localIso}
A digitally continuous map $h: (X,\kappa) \to (Y,\lambda)$
is a {\em local homeomorphism} [in more recent
terminology, a {\em local isomorphism}]
if for all $x \in X$, $h|_{N_{\kappa}(x)}$ is a 
$(\kappa,\lambda)$-homeomorphism
[$(\kappa,\lambda)$-isomorphism] onto
$N_{\lambda}(h(x))$.
\end{definition}

The following was noted in~\cite{BxVariants}.

\begin{prop}
\label{localPLequiv}
Let $h: (X,\kappa) \to (Y,\lambda)$ be a digitally continuous map.
If $h$ is a local isomorphism then $h$ is a PL isomorphism.
\end{prop}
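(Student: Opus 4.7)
The plan is to unpack the two definitions and observe that Proposition~\ref{localPLequiv} is almost immediate from them. Fix an arbitrary $x \in X$. Because $h$ is a local isomorphism, the restricted map
\[
h|_{N_{\kappa}(x)} : N_{\kappa}(x) \to N_{\lambda}(h(x))
\]
is, by Definition~\ref{localIso}, a $(\kappa,\lambda)$-isomorphism \emph{onto} $N_{\lambda}(h(x))$. Being an isomorphism onto $N_{\lambda}(h(x))$ means in particular that this restriction is a surjection onto $N_{\lambda}(h(x))$, so
\[
h(N_{\kappa}(x)) \;=\; N_{\lambda}(h(x)).
\]

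Now apply Definition~\ref{PL-iso}. To confirm that $h$ is a PL isomorphism, I need to exhibit, for each $x \in X$, a $\lambda$-isomorphism between $h(N_{\kappa}(x))$ and $N_{\lambda}(h(x))$. Since these two sets are literally equal as digital images (same underlying point set in $Y$, same induced $\lambda$-adjacency), the identity map $\id_{N_{\lambda}(h(x))}$ is trivially a $\lambda$-isomorphism between them. Hence $h$ satisfies Definition~\ref{PL-iso}.

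There is no real obstacle here; the only point worth being careful about is distinguishing the two definitions. A local isomorphism demands that the map $h$ \emph{itself}, when restricted, witness the isomorphism onto $N_{\lambda}(h(x))$; a PL isomorphism merely requires the \emph{existence} of some $\lambda$-isomorphism between $h(N_{\kappa}(x))$ and $N_{\lambda}(h(x))$. Thus the implication reduces to the observation that ``$h|_{N_\kappa(x)}$ maps onto $N_{\lambda}(h(x))$'' already forces the images to coincide, which makes the required isomorphism trivial to produce. The converse, by contrast, is not claimed and would fail, since a PL isomorphism permits $h|_{N_\kappa(x)}$ to be many-to-one even though its image is abstractly isomorphic to $N_{\lambda}(h(x))$.
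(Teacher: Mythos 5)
Your argument is correct: since Definition~\ref{localIso} makes $h|_{N_{\kappa}(x)}$ an isomorphism \emph{onto} $N_{\lambda}(h(x))$, the image $h(N_{\kappa}(x))$ equals $N_{\lambda}(h(x))$, and the identity map then witnesses the $\lambda$-isomorphism required by Definition~\ref{PL-iso}. The paper itself gives no proof, stating the proposition as ``noted in'' the cited reference, and your one-line observation is exactly the argument such a remark presupposes, so there is nothing to compare beyond noting that your write-up makes the implicit reasoning explicit.
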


\begin{thm}
    {\rm (\cite{PakZak}, correcting an error of~\cite{Han04})}
    \label{PakZakEquiv}
    Let $f: (X,\kappa) \to (Y,\lambda)$ be a continuous
    surjection. Then $f$ is a digital covering map
    if and only if $f$ is a local isomorphism.
\end{thm}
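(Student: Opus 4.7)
The plan is to prove the two directions separately, essentially by unraveling the definitions, using Definition~\ref{coveringDef} on one side and Definition~\ref{localIso} on the other.

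For the forward direction, assume $f$ is a digital covering map and fix $x \in X$; set $b = f(x)$. I would apply Definition~\ref{coveringDef} at $b$ to obtain the decomposition $f^{-1}(N_\lambda(b)) = \bigcup_{i \in M} N_\kappa(e_i)$ with the $e_i \in f^{-1}(b)$. The key preliminary observation is that $x$ itself must equal some $e_i$: indeed $x \in f^{-1}(b) \subset f^{-1}(N_\lambda(b))$, so $x \in N_\kappa(e_i)$ for some $i$, and then condition~(3) of Definition~\ref{coveringDef} makes $f|_{N_\kappa(e_i)}$ injective, forcing $x = e_i$ from $f(x) = f(e_i)$. Condition~(3) then directly yields that $f|_{N_\kappa(x)}$ is a $(\kappa,\lambda)$-isomorphism onto $N_\lambda(f(x))$, which is precisely the definition of a local isomorphism.

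For the reverse direction, assume $f$ is a local isomorphism. Fix $b \in Y$ and enumerate $f^{-1}(b) = \{e_i\}_{i \in M}$. Condition~(3) of Definition~\ref{coveringDef} is immediate from the local isomorphism applied at each $e_i$, since $f(e_i) = b$. For condition~(2), if some $x$ lies in both $N_\kappa(e_i)$ and $N_\kappa(e_j)$, then $e_i, e_j \in N_\kappa(x)$, and injectivity of $f|_{N_\kappa(x)}$ (guaranteed by local isomorphism at $x$) combined with $f(e_i) = b = f(e_j)$ yields $e_i = e_j$. For condition~(1), the inclusion $\bigcup_i N_\kappa(e_i) \subset f^{-1}(N_\lambda(b))$ follows from each $f|_{N_\kappa(e_i)}$ mapping into $N_\lambda(b)$; conversely, if $x \in f^{-1}(N_\lambda(b))$, then $f(x) \adjeq b$, so $b \in N_\lambda(f(x))$, and since $f|_{N_\kappa(x)}$ is surjective onto $N_\lambda(f(x))$, there exists $y \in N_\kappa(x)$ with $f(y) = b$. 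That $y$ equals some $e_i$, and symmetry of adjacency gives $x \in N_\kappa(e_i)$.

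The main obstacle I anticipate is condition~(1) of the covering map definition in the reverse direction: the ``obvious'' containment is one way, while the other direction requires using the surjectivity half of the local isomorphism at $x$ (not at any $e_i$) to produce a preimage of $b$ adjacent to $x$. Everything else is essentially bookkeeping: one must be careful about the asymmetry between $N_\kappa(x)$ (which includes $x$) and the use of $\adjeq$ versus $\adj$, but no nontrivial combinatorial argument is needed beyond that.
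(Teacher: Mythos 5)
Your argument is correct. Note first that the paper itself offers no proof of this statement: it is quoted as a known result of Pakdaman--Zakki \cite{PakZak} (correcting \cite{Han04}), so there is no in-paper argument to compare against; what you have written is a sound, self-contained proof in the same definition-unwinding spirit as the cited source. The two genuinely load-bearing steps are exactly the ones you flag: in the forward direction, the identification $x=e_i$, which follows because $x\in f^{-1}(b)\subset f^{-1}(N_\lambda(b))$ places $x$ in some $N_\kappa(e_i)$, and injectivity of $f|_{N_\kappa(e_i)}$ together with $f(x)=b=f(e_i)$ (both points lying in $N_\kappa(e_i)$, since $N_\kappa(e_i)$ contains $e_i$ itself) forces $x=e_i$; and in the reverse direction, the inclusion $f^{-1}(N_\lambda(b))\subset\bigcup_{i\in M}N_\kappa(e_i)$, which needs surjectivity of $f|_{N_\kappa(x)}$ onto $N_\lambda(f(x))$ at the point $x$ being tested, not at the fiber points. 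Your verifications of conditions (2) and (3), using symmetry of $\adjeq$ and injectivity of $f|_{N_\kappa(x)}$, are likewise correct, and taking $M$ to enumerate the whole fiber $f^{-1}(b)$ (nonempty by surjectivity) is the right choice of index set.
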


We will also discuss the following notion.

\begin{definition}
    {\rm \cite{HanUnique}}
    \label{WL-iso}
    A function $h: (X,\kappa) \to (Y,\lambda)$
    is a {\em weakly local (WL) isomorphism} if for all
    $x \in X$, $h|_{N(x,1)}$ is an isomorphism onto
    $h(N(x,1))$.
\end{definition}

\section{Han's pseudo-covering maps in~~\cite{HanEquiv}}
Han defines a digital pseudo-covering as follows.

\begin{definition}
\label{HanPseudocover}
{\rm \cite{HanUnique}}
    Let $p: (E,\kappa) \to (B,\lambda)$ be a surjection
    such that for every $b \in B$,
    \begin{enumerate}
        \item there is an index set $M$ such that
              $p^{-1} (N_{\kappa}(b,1)) = 
              \bigcup_{i \in M} N_{\kappa}(e_i,1)$, where
              $e_i \in p^{-1}(b)$;
        \item if $i,j \in M$ and $i \neq j$, then
              $N_{\kappa}(e_i,1) \cap N_{\kappa}(e_j,1) =
              \emptyset$; and
        \item $p|_{N_{\kappa}(e_i,1)}: N_{\kappa}(e_i,1) \to
               N_{\lambda}(b,1)$ is a WL-isomorphism for 
               all $i \in M$.
    \end{enumerate}
    Then $p$ is a {\em pseudo-covering} map.
\end{definition}

However, A. Pakdaman shows in~\cite{Pak22} that Han's definition
does not effectively give us a new object of study.
In particular, Pakdaman shows the following.

\begin{thm}
{\rm \cite{Pak22}}
\label{pseudoCoverIsCover}
    A digital pseudo-covering map as defined in
    Definition~\ref{HanPseudocover} is in fact a digital covering map.
\end{thm}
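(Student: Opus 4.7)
The plan is to verify the three conditions of Definition~\ref{coveringDef} using only the pseudo-covering hypotheses. Conditions~1 and~2 of Definition~\ref{coveringDef} coincide essentially verbatim with conditions~1 and~2 of Definition~\ref{HanPseudocover} (recalling that $N(x)$ and $N(x,1)$ both denote the closed neighborhood, and correcting what must be a $\kappa$/$\lambda$ typo in the statement of the pseudo-covering), so they transfer immediately. The real work lies in upgrading condition~3 from a ``WL-isomorphism'' to a full $(\kappa,\lambda)$-isomorphism onto $N_\lambda(b)$, and verifying continuity of $p$.

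The key observation is that the WL-iso hypothesis applied at the center $x=e_i$ of its own domain $N_\kappa(e_i,1)$ is much stronger than it first appears. Inside $N_\kappa(e_i,1)$, the neighborhood of $e_i$ is the entire domain, so Definition~\ref{WL-iso} at $e_i$ degenerates into the statement that $p|_{N_\kappa(e_i,1)}$ is a genuine isomorphism onto its image $p(N_\kappa(e_i,1))$. In particular $p|_{N_\kappa(e_i,1)}$ is injective, and its image is contained in $N_\lambda(b,1)$ by continuity of that isomorphism. Hence the only remaining issue for condition~3 is to show the image actually equals $N_\lambda(b,1)$.

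This surjectivity step is the main obstacle, and the trick I would use is to apply condition~1 of Definition~\ref{HanPseudocover} at a \emph{different} base point. Given any $b' \in N_\lambda(b,1)$ with $b'\neq b$, write $p^{-1}(N_\lambda(b',1)) = \bigcup_k N_\kappa(\tilde e_k,1)$ with $\tilde e_k \in p^{-1}(b')$. Since $b \adj_\lambda b'$, the point $e_i \in p^{-1}(b)$ lies in $p^{-1}(N_\lambda(b',1))$, so $e_i \in N_\kappa(\tilde e_{k_0},1)$ for some $k_0$. Equality $e_i = \tilde e_{k_0}$ is impossible (it would force $b=b'$), so $e_i \adj_\kappa \tilde e_{k_0}$, which places $\tilde e_{k_0}$ inside $N_\kappa(e_i,1)$ and exhibits $b' = p(\tilde e_{k_0})$ in the image.

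Once this is in hand, $p|_{N_\kappa(e_i,1)}$ is an isomorphism onto $N_\lambda(b,1)$, establishing condition~3. Global continuity of $p$ then comes essentially for free: if $e \adj_\kappa e'$ in $E$, apply condition~1 at $b=p(e)$ to place $e$ in some $N_\kappa(e_{i_0},1)$ with $p(e_{i_0})=b$; injectivity of the isomorphism on that ball forces $e = e_{i_0}$, hence $e' \in N_\kappa(e,1)$, and applying the isomorphism on that ball gives $p(e) \adj_\lambda p(e')$. Combining these ingredients realizes $p$ as a digital covering map in the sense of Definition~\ref{coveringDef}.
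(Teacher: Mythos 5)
Your proposal is correct, but there is nothing in the paper to compare it against: Theorem~\ref{pseudoCoverIsCover} is stated as a quotation of Pakdaman's result in~\cite{Pak22}, and no proof is reproduced in the paper itself. Taken on its own terms, your argument is sound and complete. The two pieces of real content are exactly the ones you isolate: first, the observation that the ``weakly local'' hypothesis of Definition~\ref{HanPseudocover}(3), applied at the center point $x=e_i$, is not local at all, because every point of $N_{\kappa}(e_i,1)$ is adjacent-or-equal to $e_i$, so Definition~\ref{WL-iso} at $e_i$ already forces $p|_{N_{\kappa}(e_i,1)}$ to be an isomorphism onto its image (in particular injective); and second, surjectivity onto $N_{\lambda}(b,1)$, obtained by applying condition~1 of Definition~\ref{HanPseudocover} at an adjacent base point $b'$ and using $p(\tilde e_{k_0})=b'\neq b=p(e_i)$ to exclude $\tilde e_{k_0}=e_i$, which places $\tilde e_{k_0}$ in $N_{\kappa}(e_i,1)$ with image $b'$. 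Both steps check out, and your final paragraph correctly supplies the global continuity of $p$, which Definition~\ref{HanPseudocover} does not assume but Definition~\ref{coveringDef} requires; that is a point easily overlooked. (One cosmetic remark: the containment $p(N_{\kappa}(e_i,1))\subset N_{\lambda}(b,1)$ is already immediate from condition~1 of Definition~\ref{HanPseudocover}, so the appeal to continuity there is not needed, though it is not wrong.) The mechanism you use---collapsing the WL condition at the center of each $N_{\kappa}(e_i,1)$ and then upgrading to surjectivity onto $N_{\lambda}(b,1)$---is the natural one and is in the spirit of the argument in~\cite{Pak22}, so this should be viewed as a correct reconstruction of the cited proof rather than a genuinely different route.
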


\begin{definition}
    {\rm \cite{Han05}} Let $p: (E,\kappa) \to (B,\lambda)$
    be $(\kappa,\lambda)$-continuous. Let 
    $f: [0,m]_{\Z} \to B$ be $(c_1, \lambda)$-continuous.
    A $(c_1,\kappa)$-continuous function 
    $\tilde{f}: [0,m]_{\Z} \to E$ such that
    $p \circ \tilde{f} = f$ is a 
    {\em (digital) path lifting of $f$}. If for every
    $b_0 \in B$, every $e_0 \in p^{-1}(b_0)$, and every
    path $f$ such that $f(0)=b_0$, there is a unique
    lifting $\tilde{f}$ such that $\tilde{f}(0)=e_0$,
    then $p$ has the {\em unique path lifting property}.
\end{definition}

Han's variants of covering maps don't all give us new notions, as
shown by the following expansion of Theorem~\ref{pseudoCoverIsCover}.

\begin{thm}
{\rm \cite{BxVariants}}
\label{coverEquivs}
 Let $p: (X,\kappa) \to (Y,\lambda)$ be a continuous
    surjection. Then the following are equivalent.
    \begin{itemize}
        \item $p$ is a digital covering map.
        \item $p$ is a local isomorphism.
        \item $p$ is a pseudo-covering in the sense of
            Definition~\ref{HanPseudocover}.
        \item $p$ is a WL-isomorphism with the unique
              path lifting property.
\end{itemize}
\end{thm}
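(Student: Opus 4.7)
The plan is to establish the four-way equivalence by reducing everything to the already-established equivalence between digital covering maps and local isomorphisms, then handling the pseudo-covering and WL-isomorphism characterizations separately. Concretely, I would prove the cycle
(covering) $\Leftrightarrow$ (local isomorphism), (pseudo-covering) $\Leftrightarrow$ (covering), (covering) $\Rightarrow$ (WL-iso with UPLP), and finally (WL-iso with UPLP) $\Rightarrow$ (covering).

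For the first two equivalences, Theorem~\ref{PakZakEquiv} gives (digital covering) $\Leftrightarrow$ (local isomorphism) immediately, and Theorem~\ref{pseudoCoverIsCover} gives the nontrivial direction (pseudo-covering) $\Rightarrow$ (covering). The reverse implication (covering) $\Rightarrow$ (pseudo-covering) is essentially trivial: if $p|_{N_\kappa(e_i)}$ is a $(\kappa,\lambda)$-isomorphism onto $N_\lambda(b)$, then its further restriction to any $N(y,1)$ with $y \in N_\kappa(e_i)$ is still an isomorphism onto its image, which is exactly the WL-isomorphism condition required by Definition~\ref{HanPseudocover}(3); clauses~(1) and~(2) of Definitions~\ref{coveringDef} and~\ref{HanPseudocover} agree verbatim.

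Third, (digital covering) $\Rightarrow$ (WL-isomorphism with UPLP) is straightforward. A local isomorphism is automatically a WL-isomorphism, because $h|_{N(x,1)}$ being an isomorphism onto $N_\lambda(h(x))$ is in particular an isomorphism onto its image $h(N(x,1))$. The UPLP half is the standard digital path-lifting theorem, proved in~\cite{Han05} (the statement appears as a commented-out Theorem in the source).

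The main obstacle, and the only direction requiring genuine work, is (WL-isomorphism with UPLP) $\Rightarrow$ (digital covering). Here I would fix $b \in B$, pick any $e \in p^{-1}(b)$, and verify the three clauses of Definition~\ref{coveringDef} in turn. For clause~3, the WL-isomorphism hypothesis already gives that $p|_{N_\kappa(e,1)}$ is an isomorphism onto $p(N_\kappa(e,1))$; continuity gives $p(N_\kappa(e,1)) \subset N_\lambda(b,1)$; and UPLP supplies the reverse inclusion by lifting, for each $b' \in N_\lambda(b,1) \setminus \{b\}$, the two-point path $(b,b')$ starting at $e$ to produce $e' \in N_\kappa(e,1)$ with $p(e') = b'$. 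For clause~1, one containment is immediate from clause~3; for the reverse, any $x$ with $p(x) \in N_\lambda(b,1)$ either lies in $p^{-1}(b)$, or one lifts the one-step path $(p(x),b)$ starting at $x$ to obtain some $e' \in p^{-1}(b) \cap N_\kappa(x,1)$, placing $x \in N_\kappa(e',1)$. The trickiest step, which I expect to be the real obstacle, is clause~2 (disjointness): if $y \in N_\kappa(e_1,1) \cap N_\kappa(e_2,1)$ with distinct $e_1, e_2 \in p^{-1}(b)$, then either $p(y) = b$, in which case the clause~3 isomorphism applied to $e_1$ forces $y = e_1$ and applied to $e_2$ forces $y = e_2$, a contradiction; or $p(y) \neq b$, and then the one-step path $(p(y), b)$ admits two distinct lifts $(y,e_1)$ and $(y,e_2)$ beginning at $y$, contradicting UPLP.
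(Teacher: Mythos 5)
Your argument is correct, but note that there is no in-paper proof to compare it with: the paper states Theorem~\ref{coverEquivs} as an imported result of~\cite{BxVariants}, with the individual ingredients credited to~\cite{PakZak} (Theorem~\ref{PakZakEquiv}), \cite{Pak22} (Theorem~\ref{pseudoCoverIsCover}), and~\cite{Han05} (path lifting). Your decomposition matches that assembly: you quote the two nontrivial imported equivalences, observe that (covering) $\Rightarrow$ (pseudo-covering in the sense of Definition~\ref{HanPseudocover}) and (local isomorphism) $\Rightarrow$ (WL-isomorphism) are restriction arguments, and then supply your own proof of the only direction not directly covered by the quoted results, namely (WL-isomorphism with unique path lifting) $\Rightarrow$ (covering); your verification of clauses 1--3 of Definition~\ref{coveringDef} there is sound, and in fact your clause-3 argument alone already shows $p$ is a local isomorphism, so you could have finished immediately by Theorem~\ref{PakZakEquiv} instead of checking clauses 1 and 2 separately. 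One point you should make explicit rather than leave implicit: your lifting steps in clauses 1 and 3 use the \emph{existence} of lifts, which is part of the paper's definition of the unique path lifting property (``there is a unique lifting''); under a uniqueness-only reading of that phrase the implication would be false, as witnessed by Pakdaman's map $p:\N^* \to C_n$, which is a WL-isomorphic surjection with unique-when-existing lifts but is not a covering. As written, with the paper's definition, your proof stands.
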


In order to obtain a notion of a digital pseudo-cover that is not equivalent to
a digital cover, Pakdaman gave the following definition.

\begin{definition}
    \label{PakPseudocovering}
    {\rm \cite{Pak22}}
    Let $p: (E,\kappa) \to (B,\lambda)$ be a surjection
    of digital images. Suppose for all $b \in B$ we
    have the following.
    \begin{enumerate}
        \item for some index set $M$,
              $\bigcup_{i \in M} N_{\kappa}(e_i,1)
              \subset p^{-1}(N_{\lambda}(b,1))$ where
              $e_i \in p^{-1}(b)$;
        \item if $i,j \in M$ and $i \neq j$ then
              $N_{\kappa}(e_i,1) \cap N_{\kappa}(e_i,1) =
              \emptyset$; and
        \item for all $i \in M$,
              $p|{N_{\kappa}(e_i,1)}: N_{\kappa}(e_i,1) \to
               p(N_{\kappa}(e_i,1))$ is a
               $(\kappa, \lambda)$-isomorphism.
    \end{enumerate}
    Then $p$ is a {\em $(\kappa,\lambda)$-pseudocovering map}.
\end{definition}

Pakdaman~\cite{Pak22} showed that Definition~\ref{PakPseudocovering}, unlike
Definition~\ref{HanPseudocover}, gives us a notion of a digital pseudo-cover
that is not a digital cover. He did this by showing that, unlike a digital
cover, the version of digital pseudo-cover in Definition~\ref{PakPseudocovering}
need not have the unique lifting property.

Having recognized that his previous definition of a digital pseudo-cover, stated here as Definition~\ref{HanPseudocover}, failed to achieve his goal, Han then stated
in~\cite{HanCorrig} the same alternate definition of a digital pseudo-cover 
that Pakdaman gave (above, Definition~\ref{PakPseudocovering}), without attributing it
to Pakdaman (see Han's Definition~4.1 in~\cite{HanCorrig}). Since~\cite{HanCorrig}
does acknowledge the existence of Pakdaman's~\cite{Pak22}, this serves as an
example of, at best, careless research by Han, and perhaps a breach of ethics.

\section{Other flaws in \cite{HanCorrig}}
In this section, we take $p: \N^* \to C_n$, where $C_n$ is a digital
simple closed curve of~$n$ points $\{c_0, c_1, \ldots, c_{n-1} \}$ indexed
circularly, and~$p$ is defined by
\[ p(z) = c_{z \! \! \mod n}.\]

In Remark 3.10(1) of~\cite{HanCorrig}, Han claims

\begin{assert}
    \label{Han3.10-1}
    There exists $b \in C_n$ such that
    \[ \bigcup_{i \in \N^*} N(e_i) \neq p^{-1}(N(b)), ~~~ \mbox{ where } ~~~
       p^{-1}(b) = \{ e_i \mid i \in \N^* \}.
    \]
\end{assert}

\begin{prop}
    \label{Han3.10False}
    Assertion~\ref{Han3.10-1} is false.
\end{prop}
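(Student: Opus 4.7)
The plan is to disprove Assertion~\ref{Han3.10-1} by exhibiting a specific $b \in C_n$ at which $\bigcup_{i \in \N^*} N(e_i) = p^{-1}(N(b))$, i.e., at which the mismatch Han needs does not occur. I will take $b = c_0 = p(0)$: this is the unique vertex of $C_n$ whose fibre contains the boundary point $0$ of $\N^*$, and hence the only place where an end-of-domain asymmetry could even conceivably produce a mismatch. Verifying equality there is therefore exactly the counterexample the proposition demands.

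The mechanical computation proceeds in four short steps. First, enumerate $p^{-1}(c_0) = \{0, n, 2n, 3n, \ldots\}$ with $e_0 = 0$ and $e_j = jn$ for $j \geq 1$. Second, from the simple closed curve adjacency on $C_n$, $N_\lambda(c_0) = \{c_{n-1}, c_0, c_1\}$, so
\[
p^{-1}(N_\lambda(c_0)) = \{\, z \in \N^* : z \bmod n \in \{0,\, 1,\, n - 1\} \,\}.
\]
Third, compute the $c_1$-neighbourhoods in $\N^*$: since $0$ has only the single $c_1$-neighbour $1$, $N(e_0) = \{0, 1\}$; for $j \geq 1$, $N(e_j) = \{jn - 1,\, jn,\, jn + 1\}$. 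Fourth, union and compare:
\[
\bigcup_{i \in \N^*} N(e_i) \;=\; \{0, 1\} \;\cup\; \bigcup_{j \geq 1} \{jn - 1,\, jn,\, jn + 1\},
\]
and a residue-class check shows this set equals $\{z \in \N^* : z \bmod n \in \{0, 1, n - 1\}\}$: residue $0$ is furnished by $\{jn\}_{j \geq 0}$, residue $1$ by $\{1\} \cup \{jn + 1\}_{j \geq 1}$, and residue $n - 1$ by $\{jn - 1\}_{j \geq 1}$.

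The key subtlety, and the point on which my previous attempt stumbled, is the boundary behaviour of $\N^*$ at $0$. The apparent worry is that residue class $n - 1$ is omitted from the union because $N(e_0) = \{0, 1\}$ has no predecessor $-1$; the resolution is that $N(e_1) = \{n - 1, n, n + 1\}$ already recovers the smallest element of residue class $n - 1$, and every subsequent $N(e_j)$ supplies $jn - 1$. With that covered, the equality goes through for $b = c_0$, and Assertion~\ref{Han3.10-1} is refuted.
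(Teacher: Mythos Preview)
Your computation at $b=c_0$ is correct and matches the paper's, but the logical framing is off. Assertion~\ref{Han3.10-1} is an \emph{existential} claim (``there exists $b\in C_n$ such that the two sets differ''), so refuting it requires the \emph{universal} negation: for \emph{every} $b\in C_n$ the two sets coincide. Exhibiting equality at a single point $b=c_0$ is not a ``counterexample'' to an existential statement; your sentence ``Verifying equality there is therefore exactly the counterexample the proposition demands'' misreads what is needed.

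You do gesture at the missing piece by calling $c_0$ ``the only place where an end-of-domain asymmetry could even conceivably produce a mismatch,'' but that observation is asserted, not proved. The paper closes the gap by treating the remaining case $b=c_j$ with $j\not\equiv 0\pmod n$ explicitly: there each fibre element $e_i=j+in\ge 1$ has the full three-point $c_1$-neighbourhood $\{e_i-1,\,e_i,\,e_i+1\}\subset\N^*$, and one checks directly that $\bigcup_i N(e_i)=p^{-1}(N(c_j))$. Adding that (easier) verification --- or turning your informal remark into a rigorous argument that the $j\neq 0$ case is automatic --- would complete your proof.
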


\begin{proof}
Consider the following cases.
   \begin{itemize}
        \item Suppose $b=c_0$. We have 
             \[ p^{-1}(N(c_0)) = \{0,1 \} \cup \bigcup_{i=1}^{\infty} [i \cdot n -1, i \cdot n+1]_{\Z}
                = \bigcup_{i \in \N^*} N(e_i),
             \]
             where $e_i = i \cdot n$ and $p^{-1}(0) = \{ e_i \mid i \in \N^* \}$.
        \item Suppose $b=c_j$ such that $j > 0$. We have 
             \[ p^{-1}(N(c_j)) = \bigcup_{i=0}^{\infty} [j + i \cdot n -1, j + i \cdot n+1]_{\Z}
                = \bigcup_{i \in \N^*} N(e_i),
             \]
             where $e_i = j + i \cdot n$ and $p^{-1}(c_j) = \{ e_i \mid i \in \N^* \}$.
    \end{itemize}
    Therefore we have  $p^{-1}(N(b)) = \bigcup_{i \in \N^*} N(e_i)$ in both
    cases. This completes the proof.
\end{proof}

Han claims that the following Corollary~\ref{Han3.11} (stated
in\cite{HanCorrig} as Corollary~3.11) follows from his Remark~3.10.
However, since we see in Proposition~\ref{Han3.10False} that Han's
Remark~3.10 is false, Corollary~\ref{Han3.11} remains in need of a proof,
which we give below.

\begin{cor}
    \label{Han3.11}
    A pseudo-covering map in the sense of Definition~\ref{HanPseudocover} is
    a WL-isomorphic surjection, but the converse does not obtain.
\end{cor}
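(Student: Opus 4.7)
The plan is to prove both halves directly: unpack Definition~\ref{HanPseudocover} for the forward implication and exhibit an explicit counterexample for the converse.

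For the forward direction, let $p:(E,\kappa)\to(B,\lambda)$ be a pseudo-covering and fix any $x\in E$; set $b=p(x)$. Condition (1) supplies a disjoint decomposition $p^{-1}(N_\lambda(b,1))=\bigcup_{i\in M}N_\kappa(e_i,1)$ with each $e_i\in p^{-1}(b)$, so $x$ lies in $N_\kappa(e_j,1)$ for a unique $j\in M$. Applying the WL-isomorphism clause (condition (3)) at the basepoint $e_j$ of its own $1$-neighborhood, one notes that the $1$-neighborhood of $e_j$ inside $N_\kappa(e_j,1)$ is $N_\kappa(e_j,1)$ itself, so $p|_{N_\kappa(e_j,1)}$ is an actual isomorphism onto its image. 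In particular it is injective, and $p(x)=b=p(e_j)$ forces $x=e_j$. Hence $p|_{N_\kappa(x,1)}=p|_{N_\kappa(e_j,1)}$ is an isomorphism onto $p(N_\kappa(x,1))$, verifying the WL-isomorphism condition at $x$. Surjectivity is already part of the hypothesis.

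For the converse, I would take $E=[0,3]_{\Z}$ with the $c_1$-adjacency, $B=C_4\subset\Z^2$ a $4$-adjacency digital simple closed curve $\{c_0,c_1,c_2,c_3\}$, and define $p(i)=c_i$. A direct case check on the four neighborhoods $N_\kappa(i,1)$ shows $p|_{N_\kappa(i,1)}$ is an isomorphism onto its image in each case: at $i=0,3$ the image is a $\lambda$-adjacent pair, while at $i=1,2$ the image is a $3$-point $\lambda$-path in $C_4$, because $c_0$ and $c_2$ are not $\lambda$-adjacent and similarly for $c_1,c_3$. Thus $p$ is a WL-isomorphic surjection. However, $p$ fails condition (1) of Definition~\ref{HanPseudocover} at $b=c_0$: one has $p^{-1}(c_0)=\{0\}$ and $p^{-1}(N_\lambda(c_0,1))=\{3,0,1\}$, yet the only candidate union of the required form is $N_\kappa(0,1)=\{0,1\}$, which misses the point $3$.

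The only real subtlety is recognizing that condition (3) of Definition~\ref{HanPseudocover}, specialized at the distinguished point $e_j$, upgrades ``WL-isomorphism'' to a genuine isomorphism on all of $N_\kappa(e_j,1)$; this is what lets us identify an arbitrary preimage of $b$ with some $e_j$ and thereby transfer the local structure at $e_j$ to the arbitrary point $x$. Everything else is routine bookkeeping of neighborhoods and adjacencies.
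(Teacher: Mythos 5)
Your proof is correct, but it proceeds quite differently from the paper's. For the forward implication, the paper simply invokes Theorem~\ref{coverEquivs}, which already records that a pseudo-covering in the sense of Definition~\ref{HanPseudocover} coincides with a digital covering map and hence is a WL-isomorphism (with unique path lifting); you instead argue directly from Definition~\ref{HanPseudocover}, using the observation that condition~(3) evaluated at the center $e_j$ of its own $1$-neighborhood upgrades the WL-hypothesis to a genuine isomorphism of $p|_{N_\kappa(e_j,1)}$ onto its image, and then identifying an arbitrary point $x$ with some $e_j$ via injectivity --- this is essentially the mechanism behind Pakdaman's Theorem~\ref{pseudoCoverIsCover}, so your argument is a self-contained (and continuity-free) substitute for the citation. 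For the failure of the converse, the paper uses the standing map $p:\N^*\to C_n$, $p(z)=c_{z\bmod n}$, checks it is a WL-isomorphic surjection, and shows it lacks the unique path lifting property (following Pakdaman's Proposition~3.2), so that Theorem~\ref{coverEquivs} rules it out as a pseudo-covering; you instead take the finite map $[0,3]_\Z\to C_4$, $i\mapsto c_i$, and refute condition~(1) of Definition~\ref{HanPseudocover} directly at $b=c_0$, where $p^{-1}(c_0)=\{0\}$ forces the candidate union to be $N_\kappa(0,1)=\{0,1\}\neq\{0,1,3\}$. Your route buys elementarity and independence from Theorem~\ref{coverEquivs} and from lifting arguments (and your checks of the neighborhoods of $C_4$ under $4$-adjacency are correct); the paper's route buys brevity and ties the counterexample to the lifting-property theme that runs through its discussion of Pakdaman's work. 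Either way the corollary is established.
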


\begin{proof}
    By Theorem~\ref{coverEquivs}, a pseudo-covering map in the sense of 
    Definition~\ref{HanPseudocover} is a WL-isomorphic surjection.

    The map $p$ assumed for this section is easily seen to be a WL-isomorphic
    surjection. However, as in the proof of Proposition~3.2 
    of~\cite{Pak22},~$p$ lacks the unique path lifting property:
    Given the path $P=\{s_0, s_{n-1} \}$ in~$C_n$, there is no lift of~$P$
    to a path in~$N^*$ from $0 \in p^{-1}(s_0)$ to any member of~$p^{-1}(s_{n-1})$.
    It follows from Theorem~\ref{coverEquivs} that~$p$ is not a pseudo-covering map
    in the sense of Definition~\ref{HanPseudocover}.
\end{proof}

\section{Further remarks}
S.-E. Han's paper~\cite{HanCorrig} is shown in the current work to have a major
mathematical error, trivialities (having introduced variants
on the notion of a covering map that don't really vary), 
content that is unoriginal but presented as if original,
and citations that are incorrect or inappropriate.
These are features of many of Han's papers, examples of which, in addition
to those discussed above, include: \newline

\begin{tabular}{l l}
    Han~paper & Flaws~discussed~in \\ \hline
    \cite{HanDeform} & \cite{BxRemarksHanDeform} \\
     \cite{Han05} & \cite{BxAndWedges} \\
    \cite{HanPi1&Euler} & \cite{BxSt16} \\
    \cite{Kang-Han} & \cite{ColdAndFreez}
\end{tabular}
\newline \newline 

Perhaps the outstanding example of Han's unethical conduct is in his
paper~\cite{plagiarized}, which is difficult to find
as its vehicle, the {\em Journal of Computer $\&$ Communications Research},
is not an international journal; rather, this ``journal" is described by
Han as a local publication of
Honam University, at which Han was employed when the paper was written. This
paper is largely plagiarized from \cite{Boxer94,Boxer99} and has often been cited
by Han after he pledged not to do so -- documentation available
upon request.

Authors are responsible for their writings, but referees who approve publication
of mathematical trash also bear some responsibility. The point is not that a
Han submission should be automatically rejected - he 
has introduced useful ideas into digital topology,
such as covering maps and the 
shortest-path metric. Rather, his submissions should 
be reviewed with the greatest
care, by reviewers with good knowledge of the relevant literature, who are not afraid
to recommend either rejection or major revision.

\section{Acknowledgment}
We are grateful to the anonymous referees for their
suggestions and corrections.


\begin{thebibliography}{99}
\bibitem{Boxer94}
L. Boxer,
Digitally continuous functions,
{\em Pattern Recognition Letters} 15 (1994), 833--839.

\bibitem{Boxer99}
L. Boxer,
A classical construction for the digital fundamental group,
{\em Journal of Mathematical Imaging and Vision} 10 (1999), 51--62 

\bibitem{BxAndWedges}
L. Boxer, 
Digital products, wedges, and covering spaces, 
{\em Journal of Mathematical Imaging and Vision} 25 (2006), 159--171

\bibitem{BxRemarksHanDeform}
 L. Boxer, Remarks on Digital Deformation, 
 {\em Note di Matematica} 28 (1) (2008), 77-84
 

\bibitem{ColdAndFreez}
L. Boxer, 
Cold and freezing sets in the digital plane, 
{\em Topology Proceedings} 61 (2023), 155 - 182

\bibitem{BxVariants}
L. Boxer,
Variants on Digital Covering Maps, 
{\em Filomat} 37:27 (2023), 9447--9452 

\bibitem{BxSt16}
L. Boxer and P.C. Staecker, Fundamental groups and Euler characteristics of sphere-like 
digital images, 
{\em Applied General Topology}
17(2), 2016, 139--158

\bibitem{Chen94}
L. Chen, Gradually varied surfaces and its optimal 
uniform approximation, 
{\em SPIE Proceedings} 2182 (1994), 300--307. 

\bibitem{Chen04}
L. Chen, {\em Discrete Surfaces and Manifolds}, Scientific Practical Computing, 
Rockville, MD, 2004

\bibitem{plagiarized}
S.-E. Han, 
On the classification of the digital images up to digital homotopy equivalence, 
{\em Journal of Computer $\&$ Communications Research}
10 (2000), pp. 207--216

\bibitem{HanDeform}
S.-E. Han: Generalized digital (k0, k1)-homeomorphism, 
{\em Note di Matematica}, 22 (2003),
157–166.

\bibitem{Han04}
S.-E. Han, Digital $(k_0, k_1)$-covering map and
its properties, 
{\em Honam Math. J.} 26 (2004) 107--117

\bibitem{Han05}
S-E Han,
Non-product property of the digital fundamental group,
{\em Information Sciences} 171 (2005), 73--91 

\bibitem{HanLectNotes}
S-E Han,
Discrete homotopy of a closed $k$-surface,
{\em Lecture Notes in Computer Science} 4040,
Springer-Verlag, Berlin, 2006

\bibitem{HanPi1&Euler}
 S.-E. Han, 
 Digital fundamental group and Euler characteristic of a connected sum of 
 digital closed surfaces, 
 {\em Information Sciences} 177 (2007),3314--3326

\bibitem{HanUnique}
S-E Han,
Unique pseudolifting property in digital topology,
{\em Filomat} 26 (4) (2012), 739--746

\bibitem{Han20}
S-E Han,
The most refined axiom for a digital covering space
and its utilities.
{\em Mathematics} 8 (11) (2020), 

\bibitem{HanEquiv}
S-E Han,
An equivalent condition for a pseudo
$(k_0, k_1)$-Covering Space,
{\em Filomat} 36 (15) (2022), 5093--5105

\bibitem{HanCorrig}
S-E Han,
Remarks on pseudocovering spaces in a digital topological setting:
A corrigendum,
{\em Filomat} 38:2 (2024), 569–-576

\bibitem{Kang-Han}
J.M. Kang and S-E. Han, 
The product property of the almost fixed point
property for digital spaces,
{\em AIMS Mathematics}
6 (7) (2021), 7215--7228

\bibitem{Pak22}
A. Pakdaman,
Is there any digital pseudocovering map?,
{\em Caspian Journal of Mathematical Sciences}
11 (1) (2022), 210--216

\bibitem{PakDigSemi}
A. Pakdaman,
Digital semicovering and digital quasicovering maps,
{\em Applied General Topology}
24 (1) (2023), 47--57

\bibitem{PakZak}
A. Pakdaman and M. Zakki,
Equivalent conditions for digital covering maps,
{\em Filomat} 34 (12) (2020), 4005--4014

\bibitem{Rosenfeld}
A. Rosenfeld,
`Continuous' functions on digital images,
{\em Pattern Recognition Letters} 4 (1987), 177--184 

\end{thebibliography}
\end{document}